\theoremstyle{plain}
\newtheorem{teo}{Theorem}[section]
\newtheorem{lem}{Lemma}[section]
\theoremstyle{definition}
\newtheorem{defn}{Definition}[section]
\theoremstyle{remark}
\newtheorem*{obs}{Remark}
\numberwithin{equation}{section}
\newcolumntype{P}[2]{>{#1\arraybackslash}m{#2}}
\newlist{lteo}{enumerate}{1}
\setlist[lteo,1]{font=\upshape,label=(\roman*)}
\newlist{lprova}{enumerate}{1}
\setlist[lprova,1]{leftmargin=*,font=\upshape,label=(\roman*)}
\newcommand{\bbZ}{\mathbb{Z}}
\newcommand{\bbT}{\mathbb{T}}
\newcommand{\GG}{\mathbb{G}}
\newcommand{\dsE}{\mathds{E}}
\newcommand{\dsP}{\mathds{P}}
\newcommand{\raiz}{\varnothing}
\newcommand{\dist}{\textnormal{dist}}
\newcommand{\YY}{\mathcal{X}}
\newcommand{\FM}[2]{\ensuremath{\textnormal{FM}(#1,#2)}}
\newcommand{\pc}[1]{\ensuremath{{p_c}(#1)}}
\newcommand{\s}[2]{\ensuremath{#1 \rightarrow #2}}
\newcommand{\ns}[2]{\ensuremath{#1 \nrightarrow #2}}
\newcommand{\p}[3]{\ensuremath{#1 \stackrel {#3}{\rightarrow} #2}}
\newcommand{\np}[3]{\ensuremath{#1 \stackrel {#3}{\nrightarrow} #2}}
\newcommand{\m}[3]{\ensuremath{#1 \stackrel {#3}{\leadsto} #2 }}
\begin{document}

\title[Frogs on homogeneous trees]{A new upper bound for the critical probability of the frog model on homogeneous trees}

\author{Elcio Lebensztayn}
\address[E. Lebensztayn and J. Utria]{Institute of Mathematics, Statistics and Scientific Computation\\
University of Campinas -- UNICAMP\\
Rua S\'ergio Buarque de Holanda 651, 13083-859, Campinas, SP, Brazil.}
\thanks{The authors were supported by the National Council for Scientific and Technological Development -- CNPq.}

\author{Jaime Utria}

\date{}

\subjclass[2010]{60K35, 60J85, 82B26, 82B43}

\keywords{Frog model, homogeneous tree, critical probability}

\begin{abstract}
We consider the interacting particle system on the homogeneous tree of degree $(d + 1)$, known as frog model.
In this model, active particles perform independent random walks, awakening all sleeping particles they encounter, and dying after a random number of jumps, with geometric distribution.
We prove an upper bound for the critical parameter of survival of the model, which improves the previously known results.
This upper bound was conjectured in a paper by \citeauthor{IUB} (\emph{J.\ Stat.\ Phys.}, 119(1-2), 331--345, 2005).
We also give a closed formula for the upper bound.
\end{abstract}

\maketitle

\baselineskip=22pt

\section{Introduction}
\label{S: Intro}

We study a system of branching random walks with random lifetime on a rooted graph, which is known as the frog model.
This model is inspired by the process of an infection spreading (or rumor propagation) through a population, where the transmission agents are mobile particles which move along the vertices of the graph.
Like the percolation model and other interacting particle systems such as the contact process, the frog model has two different types of behavior, depending on a parameter ($p$, in this case).
For small values of $p$, the process dies out almost surely, whereas, for large~$p$, the process survives perpetually with positive probability.
In this paper, we derive a new upper bound for the critical value $p_c$ that separates
these two regimes for the model on homogeneous trees.
This upper bound has been conjectured by \citet{IUB}.
For the proof, we construct an approximating sequence of processes that lead directly to the upper bound.

The \textit{frog model with geometric lifetime} is defined as follows.
Let $\GG$ be an infinite connected rooted graph.
The root of $\GG$ is denoted by $\raiz$.
We will assume that the system starts with one active particle located at $\raiz$, and one inactive particle at every other vertex of $\GG$.
Active particles perform independent simple random walks on~$\GG$, in discrete time, and have independent geometrically distributed random lifetimes, with parameter $(1 - p)$.
That is, at each instant of time, every awake particle may disappear with probability $(1-p)$; if it survives, it moves to one of the neighboring vertices, chosen with uniform probability.
Whenever an active particle hits a vertex containing a sleeping one, the latter is activated, and starts its own life and trajectory on $\GG$, in an independent manner.
We denote by $\FM{\GG}{p}$ the frog model on $\GG$, with survival parameter $p$, referring the reader to \citet{PT} for the formal definition of the model.

The question of phase transition for the frog model on infinite graphs was first addressed by \citet{PT}, especially on the hypercubic lattices $\bbZ^d$, $d \geq 1$, and homogeneous trees.
\citet{Mono} prove that the critical probability is not a monotonic function of the underlying graph, which is an unexpected fact, as the frog model can be thought of as a percolation model.
\citet{PTBT} study the issue of phase transition on birregular trees.
We present the known results regarding upper bounds for the critical parameter on homogeneous trees in Section~\ref{SS: DO}.

For the frog model with $p = 1$, in which all awake frogs live perpetually, a fundamental problem is whether the root of the graph is visited by infinitely many frogs.
In the first published paper dealing with the frog model, \citet{TW} prove that this holds true almost surely on $\bbZ^{d}$, for every $d \geq 1$, when the process starts with one particle per vertex.
\citet{FIRE} studies a phase transition from transience to recurrence, also on $\bbZ^{d}$, with respect to the initial density of particles.
The question of recurrence or transience for the frog model on integer lattices and infinite trees is an important topic of current research; see \citet{DGHPW,JJHb,KZ,JR}, and references therein.
Other central problems for the model without death are related to the growth of the set of visited vertices, and the movement of the cloud of particles.
We refer, for instance, to \citet{ST}, \citet{HW}, and \citet{HJJ-IS}.

\subsection{Definitions and objectives}
\label{SS: DO}

For $d \geq 2$, let $\bbT_{d}$ denote the homogeneous tree of degree $(d+1)$.
We say that a particular realization of the frog model on $\bbT_{d}$ \textit{survives} if for every instant of time there is at least one awake particle.
Otherwise, we say that it \textit{dies out}.
By a coupling argument, $\dsP[\FM{\bbT_{d}}{p}\;\text{survives}]$ is a nondecreasing function of $p$, and therefore we define the \textit{critical probability} as
\[\pc{\bbT_{d}} = \inf\left\{p: \dsP[\FM{\bbT_{d}}{p} \text{ survives}] > 0\right\}.\]

As usual, we say that there is \textit{phase transition} if $ \pc{\bbT_{d}} \in (0, 1)$.
As proved by \citet[Theorems~1.2 and 1.5]{PT}, the frog model on $\bbT_d$ with random initial configuration exhibits phase transition for every $d\geq 2$, under rather broad conditions.

The main result of \citet[Theorem~4.1]{IUB} states that, for the frog model on $\bbT_{d}$ starting with one particle per vertex,
\begin{equation}
\label{F: OrUpB}
p_c(\bbT_{d}) \leq \frac {d + 1}{2 d}.
\end{equation}
In a remark at the end of Section~4 (p.~341), the authors claim that a refinement of the argument in the proof of~\eqref{F: OrUpB} leads a better result, namely,
\begin{equation}
\label{F: UpB}
\pc{\bbT_{d}}\leq \frac{(d+1)\bar v}{1+ d\bar v^{2}},
\end{equation}
where $\bar v$ is the unique root in the interval $[0, 1/d]$ of the quartic polynomial $R^{(d)}(v)=d^{2}v^{4}-d(d+1)v^{3}+2d v-1$.
However, as the authors point out, some technical difficulties prevent a full proof of~\eqref{F: UpB}.
Our foremost purpose is to establish this new upper bound for $\pc{\bbT_{d}}$, which also improves the result derived recently by \citet{FMRT}, using Renewal Theory.
We present a proof of~\eqref{F: UpB} that relies on the ideas of~\citet{IUB}, but is completely independent, and has the advantage of offering a probabilistic interpretation, since it defines the approximation processes that allow us to arrive at formula \eqref{F: UpB}.

\section{Main results}
\label{S: MR}

For $d \geq 2$, we define
\begin{equation*}
\bar p(d) = \frac{(d+1)\bar v}{1+ d\bar v^{2}},
\end{equation*}
where $\bar v = \bar v(d)$ is the unique root in $[0, 1/d]$ of the polynomial
\begin{equation}
\label{F: Pol-v}
R^{(d)}(v)=d^{2}v^{4}-d(d+1)v^{3}+2d v-1.
\end{equation}
Let us denote by $\hat p(d)$ the upper bound for $\pc{\bbT_{d}}$ established in \citet[Proposition~2]{FMRT}:
\begin{equation*}
\hat p(d) =
\left\{
\begin{array}{cl}
0.720836 &\text{if } d = 2, \\[0.2cm]
\dfrac{(d+1)[(7d-1)-\sqrt{(7d-1)^{2}-14}]}{d(7d-1)^{2}-7d+2-d(7d-1)\sqrt{(7d-1)^{2}-14}}
&\text{if } d \geq 3.
\end{array}	\right.
\end{equation*}
It is worth noting that this bound improves~\eqref{F: OrUpB}.

\begin{teo}
\label{T: LS}
For every $d \geq 2$, we have that
\begin{lteo}
\item $\pc{\bbT_{d}} \leq \bar p(d)$.
\item $\bar p(d)$ is the unique root in (0,1) of the polynomial
\begin{equation}
\label{F: Pol-p}
Q^{(d)}(p)=p^4-\frac{4(d+1)}{(3d+1)} p^3-\frac{2(d-1)(d+1)^2}{(3d+1)^2} p^2+\frac{(d+1)^3}{d (3 d+1)} p-\frac{(d+1)^4}{d(3d+1)^2}.
\end{equation}
\item $\bar p(d)<\hat p(d)$.
\end{lteo}
\end{teo}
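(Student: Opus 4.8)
The plan is to prove the three parts in order, with part (i) carrying essentially all of the probabilistic content and parts (ii)--(iii) being algebraic. For (i) I would follow the strategy announced in the introduction and build a tractable comparison process. The natural quantity to isolate is $v = v(p)$, the probability that a killed simple random walk on $\bbT_d$, started at a vertex, ever reaches a prescribed neighbor before dying. A one-step analysis on the tree, using homogeneity and the fact that to reach a neighbor from a non-adjacent vertex one must first pass through the intervening vertex, gives the recursion $(d+1)v = p(1 + d v^2)$, that is, exactly $p = (d+1)v/(1 + d v^2)$; this is the identity underlying the definition of $\bar p(d)$. I would then define an approximating process $\APM{\bbT_d}{p}$ (and, as an intermediate device, an independent version $\IPM{\bbT_d}{p}$) in which each active particle is allowed to activate sleeping particles only in a restricted, ``outward'' fashion, so that the genealogy of activations becomes a branching process indexed by the tree. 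Discarding activations only makes survival harder, so a coupling should give that survival of $\APM{\bbT_d}{p}$ implies survival of $\FM{\bbT_d}{p}$, and hence that $\pc{\bbT_d}$ is at most the critical parameter of the approximating process.

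The core of (i) is then to compute the mean offspring of this branching process as a function of $v$ (equivalently $p$) and to identify the threshold at which it equals $1$. Using the hitting probability $v$ together with the expected number of fresh vertices a single random walk contributes along the available directions, the supercriticality condition should reduce to the vanishing of the quartic $R^{(d)}(v) = d^2 v^4 - d(d+1)v^3 + 2dv - 1$. Establishing monotonicity of the relevant mean in $v$ on $[0,1/d]$ and locating the sign change of $R^{(d)}$ would show that the critical value is attained at the unique root $\bar v \in [0,1/d]$, whence $\pc{\bbT_d} \le (d+1)\bar v/(1 + d\bar v^2) = \bar p(d)$.

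For (ii) the task is purely algebraic: eliminate $v$ between $R^{(d)}(v) = 0$ and $p = (d+1)v/(1 + d v^2)$. I would use the defining relation to convert the quartic in $v$ into a quartic in $p$, match it to $Q^{(d)}(p)$, and then argue that $Q^{(d)}$ has exactly one root in $(0,1)$ --- for instance by checking the signs of $Q^{(d)}$ at $0$ and $1$ together with a Descartes or derivative argument that rules out further sign changes in the unit interval, and by confirming that the map $v \mapsto (d+1)v/(1+dv^2)$ carries $\bar v$ to that root. For (iii) I would compare $\bar p(d)$ and $\hat p(d)$ by evaluating $Q^{(d)}$ (or its $v$-counterpart) at $\hat p(d)$ and showing it has the sign that forces $\bar p(d) < \hat p(d)$; the case $d = 2$ reduces to a direct numerical check against $0.720836$, while $d \ge 3$ requires an explicit inequality in $d$.

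The main obstacle will be step (i): making the comparison process precise enough that its survival genuinely implies survival of $\FM{\bbT_d}{p}$, yet simple enough that its mean offspring is exactly computable. This is precisely the point at which \citet{IUB} report technical difficulties, so the delicate part is the coupling between $\FM{\bbT_d}{p}$ and $\APM{\bbT_d}{p}$ and the verification that restricting activations to the chosen directions neither overcounts particles nor destroys the independence needed for the branching-process reduction. Once the branching process and its mean are correctly set up, the identification of $\bar v$ and the algebraic parts (ii)--(iii) should be routine.
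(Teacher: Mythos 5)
Your overall architecture matches the paper's: describe $\FM{\bbT_d}{p}$ via the hitting probability $v=\beta^{(d)}(p)$ with the inversion $p=(d+1)v/(1+dv^2)$, dominate the frog model from below by a branching structure, read off the critical threshold from the mean offspring, and then handle (ii) by eliminating $v$ and (iii) by a sign evaluation of $Q^{(d)}$ at $\hat p(d)$ (the paper does exactly this, checking $Q^{(d)}(0)<0<Q^{(d)}(\hat p(d))$). Parts (ii) and (iii) of your plan are therefore essentially the paper's.

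The genuine gap is in part (i), at exactly the point you flag as ``the main obstacle'': you assert that the supercriticality condition of a single approximating process $\APM{\bbT_d}{p}$ ``should reduce to'' the vanishing of $R^{(d)}(v)=d^2v^4-d(d+1)v^3+2dv-1$, but you give no mechanism that produces this quartic, and in fact no single one-generation comparison process of the kind you sketch does. The quartic arises in the paper only through a \emph{sequence} of Galton--Watson processes $\{X^{(d)}_{\ell,n}\}_{\ell\ge 0}$, one for each $n\ge 1$: the children of a vertex $x$ are the vertices $y\in L_n(x)$ at distance $n$ reachable through an inductively defined event $[\p{x}{y}{o}]$ that allows chains of partial advances (the particle from $x_0$ reaches $x_\ell$ but not $x_{\ell+1}$, and the awakened particle at $x_\ell$ or $x_{\ell-1}$ continues). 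The reproduction mean is $d^n\phi_n(\beta^{(d)}(p))$, where $\phi_n$ satisfies the second-order linear recurrence $\phi_n=b(2-b^2)\phi_{n-1}-b^3(1-b)\phi_{n-2}$; its dominant characteristic root $\lambda(b)=\tfrac{b}{2}(2-b^2+\sqrt{b^4-4b+4})$ gives $\lim_n[\phi_n(b)]^{1/n}$, and the threshold equation $\lambda(b)=1/d$ is precisely $R^{(d)}(b)=0$. One then needs a limiting argument (roots $\bar p_n(d)$ of $[\phi_n(\beta^{(d)}(p))]^{1/n}=1/d$ converging to $\bar p(d)$) because each fixed $n$ only yields a strictly weaker bound. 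Your proposal instead reproduces the single-process setup of \citet{IUB} (the $\APM{}{}$/$\IPM{}{}$ constructions), which is exactly the route those authors reported could not be pushed to the conjectured bound; without the $n$-indexed family of activation events, the recursion for $\phi_n$, and the passage to the limit, the identification of the threshold with $R^{(d)}(v)=0$ is an unproved assertion rather than a consequence of the construction.
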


For the sake of completeness, we write down an explicit formula for $\bar p(d)$, not involving imaginary numbers.
For this, we need the following definition.

\begin{defn}
\label{D: Args Pol}
For every fixed $d \geq 2$, we define the constants
{\allowdisplaybreaks
\begin{align*}
\mathcal{Q}&=-\frac{2(d+1)^2(d+2)}{(3d+1)^2},\\[0.1cm]
\mathcal{R}&=\frac{(d+1)^3(5d^2+2d+1)}{d(3d+1)^3},\\[0.1cm]
\mathcal{O}&=-\frac{(d-1)^2 (d+1)^6 \left(16 d^3-259 d^2-162 d-27\right)}{3456 \, d^2 (3 d+1)^6},\\[0.1cm]
\mathcal{P}&=-\frac{ (d-1)^2 (d+1)^4}{36(3 d+1)^4},\\[0.1cm]
\Theta &=\frac{1}{3}\arccos\left(\frac{\mathcal{O}}{\sqrt{-\mathcal{P}^3}}\right),
\; \text{with} \; \arccos:[-1,1] \mapsto [0,\pi].
\end{align*}}%
Also, let
{\allowdisplaybreaks
\begin{equation*}
\mathcal{K}=\left\{
\begin{array}{cl}
6^{-1/2}[-\mathcal{Q} + 6\{(\mathcal{O}+\sqrt{\mathcal{O}^2 +\mathcal{P}^3})^{1/3}+(\mathcal{O}-\sqrt{\mathcal{O}^2 +\mathcal{P}^3})^{1/3}\}]^{1/2} & \text{if } 2 \leq d \leq 9,\\[0.4cm]
6^{-1/2}[-\mathcal{Q}+12\sqrt{-\mathcal{P}}\cos(\Theta)]^{1/2} & \text{if } d \geq 10.
\end{array} \right.
\end{equation*}}%
\end{defn}

\begin{teo}
\label{T: Root Pol UB}
For every $d\geq 2$,
\begin{equation*}
\bar p(d)=\frac{d+1}{3d+1}- \mathcal{K}+\frac{[\mathcal{K}(-4 \mathcal{K}^3-2\mathcal{K} \mathcal{Q}+\mathcal{R})]^{1/2}}{2\mathcal{K}}.
\end{equation*}
\end{teo}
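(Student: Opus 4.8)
The plan is to obtain the closed form by solving the quartic $Q^{(d)}$ from Theorem~\ref{T: LS}(ii) explicitly via Ferrari's method, and then to lean on the uniqueness already proved there. Since Theorem~\ref{T: LS}(ii) guarantees that $\bar p(d)$ is the \emph{only} root of $Q^{(d)}$ in $(0,1)$, it suffices to show that the displayed expression (a) is a well-defined real number, (b) is a root of $Q^{(d)}$, and (c) lies in $(0,1)$; the three together force it to equal $\bar p(d)$.

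First I would remove the cubic term by the substitution $p = t + \frac{d+1}{3d+1}$, which is exactly $-a_3/4$ for the monic quartic \eqref{F: Pol-p}. A direct computation turns $Q^{(d)}(p)$ into a depressed quartic $t^4 + \mathcal{Q}\,t^2 + \mathcal{R}\,t + \mathcal{S}$, and one checks that the coefficients of $t^2$ and $t$ are precisely the constants $\mathcal{Q}$ and $\mathcal{R}$ of Definition~\ref{D: Args Pol} (this also explains the leading term $\frac{d+1}{3d+1}$ of the final formula). Ferrari's factorization $t^4 + \mathcal{Q}t^2 + \mathcal{R}t + \mathcal{S} = (t^2 + \alpha t + \beta)(t^2 - \alpha t + \gamma)$ then reduces matters to the resolvent cubic $u^3 + 2\mathcal{Q}u^2 + (\mathcal{Q}^2 - 4\mathcal{S})u - \mathcal{R}^2 = 0$ in the variable $u = \alpha^2$. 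I would verify that $\mathcal{K}$ is built precisely so that $u_0 = 4\mathcal{K}^2$ is the relevant root: depressing the resolvent cubic by $u = v - \frac{2\mathcal{Q}}{3}$ produces $v^3 + 48\mathcal{P}\,v - 128\mathcal{O} = 0$, whose Cardano data are exactly $\mathcal{O}$ and $\mathcal{P}$ and whose selected root is $v_0 = 4\mathcal{K}^2 + \frac{2\mathcal{Q}}{3}$. With $\alpha = \sqrt{u_0} = 2\mathcal{K}$, solving the factor $t^2 + \alpha t + \beta = 0$ gives $t = -\mathcal{K} + \frac{1}{2\mathcal{K}}\sqrt{\mathcal{K}(-4\mathcal{K}^3 - 2\mathcal{K}\mathcal{Q} + \mathcal{R})}$ on the $+$-branch, and adding back the shift yields the displayed formula, which settles (b).

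The two cases in the definition of $\mathcal{K}$ correspond to the sign of the cubic discriminant $\mathcal{O}^2 + \mathcal{P}^3$: the radical (Cardano) form applies when $\mathcal{O}^2 + \mathcal{P}^3 \ge 0$ and the trigonometric (casus irreducibilis) form, with the $k=0$ branch of the arccosine, when $\mathcal{O}^2 + \mathcal{P}^3 < 0$. To pin down the threshold I would factor the discriminant: after clearing the common positive factor, the sign of $\mathcal{O}^2 + \mathcal{P}^3$ equals the sign of $-(3d+1)^2\,(32d^3 - 275d^2 - 162d - 27)$, so it is governed entirely by $g(d) = 32d^3 - 275d^2 - 162d - 27$. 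Since $g(9) = -432 < 0$ and $g(10) = 2853 > 0$, and $g$ (which decreases to a minimum near $d\approx 6$ and increases thereafter) is negative on $\{2,\dots,9\}$ and positive for $d \ge 10$, the split $2 \le d \le 9$ versus $d \ge 10$ is exactly the change of sign of the discriminant. This justifies (a), since in each regime the corresponding Cardano/trigonometric expression is real, $\mathcal{K} > 0$, and the radicand $\mathcal{K}(-4\mathcal{K}^3 - 2\mathcal{K}\mathcal{Q} + \mathcal{R})$ is nonnegative.

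It remains to establish (c). Of the four roots of $Q^{(d)}$ produced by the two quadratic factors, I would isolate the one in $(0,1)$ by a sign analysis of the two nested square roots, using that $\mathcal{K} > 0$ and that $\frac{d+1}{3d+1} \in (0,1)$; the $+$-branch of the factor $t^2 + \alpha t + \beta$ lands inside the interval, while the other three roots fall outside it. I expect the genuine obstacle to be exactly this last selection step together with the reality bookkeeping: matching the precise branch of the cube roots (and of the arccosine) to the unique root in $(0,1)$ and confirming that every nested radical stays real for all $d \ge 2$. Once (a)--(c) hold, the uniqueness in Theorem~\ref{T: LS}(ii) completes the proof.
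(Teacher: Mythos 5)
Your algebraic setup coincides with the paper's: the same shift $p = z + (d+1)/(3d+1)$, the same depressed quartic $z^4+\mathcal{Q}z^2+\mathcal{R}z+\mathcal{S}$ with $\mathcal{S}=-(d+1)^4(2d+1)/(3d+1)^4$, the same resolvent cubic (the paper writes it in the variable $x=u/4=\mathcal{K}^2$, which is why its coefficients carry the factors $1/2$, $1/16$, $1/64$), the same identification of $\mathcal{O}$, $\mathcal{P}$ with the Cardano data of the depressed resolvent, and the same localization of the case split at $d=9$ versus $d=10$ via the sign of the cubic discriminant and the factor $32d^3-275d^2-162d-27$. Your reduction of the problem to showing that the displayed expression is (a) real, (b) a root of $Q^{(d)}$, and (c) in $(0,1)$, and then invoking the uniqueness from Theorem~\ref{T: LS}(ii), is legitimate and in fact slightly cleaner than the paper's framing (the paper instead argues that $\bar p(d)$ is the \emph{smallest positive} root and then counts and orders the four roots).

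The genuine gap is that you never carry out (a) and (c), and you say so yourself: asserting that ``the $+$-branch lands inside the interval while the other three roots fall outside it'' is precisely the content of the theorem, and a ``sign analysis of the nested square roots'' is not by itself an argument. This is where the paper does its real work, and it does it differently in the two regimes. For $2\le d\le 9$ the depressed quartic has two complex roots, so it is not automatic that the radicand $\mathcal{K}(-4\mathcal{K}^3-2\mathcal{K}\mathcal{Q}+\mathcal{R})$ attached to the pair $z_1,z_2$ (rather than the one attached to $z_3,z_4$) is the nonnegative one; the paper settles this by a sign count (Descartes' rule plus the quartic discriminant $\Delta<0$ force exactly one negative and one positive real root, so $z_3,z_4$ must be the imaginary pair, else $z_3$ would be a second negative root). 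For $d\ge 10$ all four roots are real and three are positive, so membership of the chosen root in $(0,1)$ requires a quantitative input; the paper supplies it by checking $g_1\bigl(-(d+1)/(3d+1)\bigr)<0$, which places the shift point strictly between $z_1$ and $z_2$ and hence identifies $z_2+(d+1)/(3d+1)$ as the root lying in the correct range. Without an argument of this kind in each case, your proof selects a branch by fiat, and the theorem is not established.
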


\section{Proof of Theorem~\ref{T: LS}}
\label{S: Proof LS}

The key idea is to construct a class of Galton--Watson branching processes which are dominated by the frog model on $\bbT_d$, in the sense that the frog model survives if each one of these processes does.
This approach is similar to that employed in~\citet{IUB} to prove~\eqref{F: OrUpB}.
However, here the branching processes are defined in a new manner, in such a way that, by studying their critical behavior, we obtain directly a sequence of upper bounds for the critical probability, which converges to $\bar p(d)$.

Let $d \geq 2$ be fixed.
The first step in the proof is to describe $\FM{\bbT_{d}}{p}$ as a percolation model.
Indeed, for each pair $(x,y)$ of distinct vertices of $\bbT_{d}$, we draw a directed edge from~$x$ to~$y$ if and only if the particle placed originally at $x$ ever visits the vertex $y$, in the event of being activated.
As usual, we denote this event by $[\s{x}{y}]$, and its complement by $[\ns{x}{y}]$.
Of course, the frog model survives if and only if there exists an infinite sequence of distinct vertices $x_0 = \raiz, x_1, \dots$, satisfying $x_0 \rightarrow x_1 \rightarrow \cdots$ (that is, the cluster of the root in the oriented percolation model has infinite size).
As proved by \citet[Lemma 2.1]{IUB}, the probability of $[\s{x}{y}]$ is
\begin{equation}
\label{F: Prob open edge}
\dsP[\s{x}{y}] = (\beta^{(d)}(p))^{\dist(x,y)},
\end{equation}
where $\dist(x,y)$ is the distance between vertices $x$ and $y$, and the function $\beta^{(d)}:[0, 1]\rightarrow[0, 1/d]$ is given by
\begin{equation*}
\beta^{(d)}(p)=\left\{
\begin{array}{cl}
\dfrac{(d+1)-\sqrt{(d+1)^{2}-4d p^{2}}}{2d p} & \text{if } 0 < p \leq 1, \\[0.5cm]
0 & \text{if } p = 0.
\end{array} \right.
\end{equation*}
We also recall from Lemma~3.1 of this paper that, for every $v\in[0,1/d]$,
\begin{equation}
\label{F: Iff}
\beta^{(d)}(p)=v \iff p=\frac{(d+1)v}{1+d v^{2}}.
\end{equation}

In the sequel, we need the following definition.

\begin{defn}
\label{D: Func}
For $b \in [0, 1/d]$, we define
\begin{align*}
\psi(b) &= \sqrt{b^4-4b+4}, \quad \text{and} \\[0.2cm]
\lambda(b) &= \frac{b}{2} \left(2-b^2 + \psi(b)\right).
\end{align*}
\end{defn}

\noindent
The central idea to prove Theorem~\ref{T: LS} is summarized in the next result.

\begin{lem}
\label{L: BP}
For every $n \geq 1$, there exist a Galton--Watson branching process $\{X^{(d)}_{\ell,n}\}_{\ell \geq 0}$ embedded in $\FM{\bbT_{d}}{p}$, and a function $\phi_n$ (not depending on $d$) with domain $[0, 1/d]$, with the following properties:
\begin{lteo}
\item $X^{(d)}_{0,n} = 1$.
\item The survival of $\{X^{(d)}_{\ell,n}\}_{\ell \geq 0}$ implies the survival of the frog model on $\bbT_{d}$.
\item The mean number of offspring per individual in $\{X^{(d)}_{\ell,n}\}_{\ell \geq 0}$ equals
\begin{equation*}
\dsE[X^{(d)}_{1,n}]=d^n \, \phi_n(\beta^{(d)}(p)).
\end{equation*}
\end{lteo}
Furthermore, for every $b \in [0, 1/d]$,
\begin{equation}
\label{F: Lim}
\lim_{n \to \infty} {\left[ \phi_n(b) \right]}^{1 / n} = \lambda(b).
\end{equation}
\end{lem}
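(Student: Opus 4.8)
The plan is to realize the branching process by \emph{slicing the tree into shells of width $n$}: generation $\ell$ consists of a random subset of the vertices at distance $\ell n$ from $\raiz$, and the children of an individual $v$ are those vertices at distance $n$ below $v$ that are reached from $v$ through a chain of activations running along the geodesic joining them. First I would fix, once and for all, a $d$-ary subtree in which $\raiz$ has only $d$ children (discarding one of its $d+1$ subtrees); since deleting frogs can only hamper the process, anything supported on this subtree still survives no more often than $\FM{\bbT_{d}}{p}$. With this normalization every individual, the root included, sits at the apex of a $d$-ary subtree, which is what makes the offspring law homogeneous and yields $X^{(d)}_{0,n}=1$.

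For the embedding I would declare a distance-$n$ descendant $w$ of $v$ to be a child of $v$ when, writing $v=u_0,u_1,\dots,u_n=w$ for the geodesic, there is an increasing chain $0=i_0<i_1<\cdots<i_k=n$ with $\s{u_{i_j}}{u_{i_{j+1}}}$ for every $j$. The crucial bookkeeping is that this event is measurable with respect to the walks of the frogs lying in the distance-$\le n$ subtree rooted at $v$, and these frog collections are disjoint for distinct individuals because their subtrees are disjoint. Hence the offspring counts are independent across individuals and, by vertex-transitivity of the $d$-ary subtree, identically distributed, so we obtain a genuine Galton--Watson process. Property (ii) is then immediate: a child corresponds to an actual chain of activations, so an infinite line of descent produces an infinite sequence $\raiz \to x_1 \to \cdots$, i.e.\ an infinite open cluster, whence the frog model on $\bbT_{d}$ survives.

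For (iii) I would compute the mean by linearity: the subtree below $v$ contains exactly $d^n$ vertices at distance $n$, and by symmetry each is a child with one and the same probability $\phi_n(\beta^{(d)}(p))$, so $\dsE[X^{(d)}_{1,n}]=d^n\phi_n(\beta^{(d)}(p))$. Here $\phi_n(b)$ is purely one-dimensional: along the geodesic the frog at $u_j$ reaches $u_{j+k}$ with probability $b^k$ by \eqref{F: Prob open edge}, and since on a tree the walk must traverse every intermediate vertex, the set of down-path vertices it reaches is an interval $\{u_{j+1},\dots,u_{j+Z_j}\}$ with $\dsP[Z_j\ge k]=b^k$, the $Z_j$ being i.i.d. Thus $\phi_n(b)=\dsP[M\ge n]$, where $M$ is the rightmost point reached by the one-dimensional growth process in which each activated site $j$ in turn activates $j+1,\dots,j+Z_j$; this depends on $b$ alone and gives $\phi_0(b)=1$, $\phi_1(b)=b$.

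Finally, for \eqref{F: Lim} I would first note that concatenating two independent chains yields the supermultiplicativity $\phi_{m+n}(b)\ge\phi_m(b)\,\phi_n(b)$, so $\lim_n\phi_n(b)^{1/n}$ exists by Fekete's lemma. To identify its value I would establish the renewal recurrence
\begin{equation*}
\phi_n(b)=(2b-b^3)\,\phi_{n-1}(b)+(b^4-b^3)\,\phi_{n-2}(b),\qquad n\ge 2,
\end{equation*}
by conditioning on the reach $Z_0$ of the first frog and exploiting the regeneration of the growth process at sites where the reach from behind lands exactly on the frontier. The limit, being nonnegative, must then coincide with the positive root of the characteristic equation $\lambda^2-(2b-b^3)\lambda-(b^4-b^3)=0$, which is precisely $\lambda(b)=\tfrac{b}{2}(2-b^2+\psi(b))$. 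I expect two genuine obstacles: arranging the frog assignment so that the offspring counts are rigorously independent (making the process truly Galton--Watson, not merely a stochastic lower bound), and justifying the second-order recurrence, whose clean two-term form conceals the multiple-front regeneration structure of the one-dimensional reach.
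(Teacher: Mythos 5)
Your architecture --- slicing $\bbT_d$ into shells of width $n$ inside a fixed $d$-ary subtree, declaring $w\in L_n(v)$ a child of $v$ via an event measurable with respect to the frogs on the geodesic from $v$ to $w$, and using disjointness of the subtrees to get genuine Galton--Watson independence across individuals --- is exactly the paper's strategy, and parts (i)--(iii) go through for it. The gap is in the identification of $\phi_n$ and hence in \eqref{F: Lim}. You take the child-event to be the \emph{full} activation-chain event, so that $\phi_n(b)=\dsP[M\ge n]$ for the one-dimensional growth process with i.i.d.\ reaches $Z_j$, $\dsP[Z_j\ge k]=b^k$, and you then assert the two-term recurrence $\phi_n=(2b-b^3)\phi_{n-1}+(b^4-b^3)\phi_{n-2}$. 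That recurrence is \emph{false} for this $\phi_n$: it happens to hold up to $n=3$ (one checks $\dsP[M\ge 3]=4b^3-3b^4-b^5+b^6$), but a direct computation gives $\dsP[M\ge 4]=8b^4-8b^5-2b^6+2b^7+b^8+b^9-b^{10}$, whereas the recurrence predicts $8b^4-8b^5-3b^6+4b^7+b^8-b^9$; the difference is $b^6(1-b)^3(1+b)>0$. The regeneration you invoke does not exist: when the front advances, several already-activated sites behind it may still reach past the new frontier, so the relevant state is the whole overshoot profile, not a single site, and $\dsP[M\ge n]$ satisfies no second-order linear recursion. Consequently your argument establishes only $\liminf_n \phi_n(b)^{1/n}\ge\lambda(b)$ (via Fekete and comparison), not the equality \eqref{F: Lim}; this is precisely the ``technical difficulty'' of \citet{IUB} that the paper is engineered to bypass.

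The paper's fix is to use a strictly smaller child-event $[\p{x_0}{x_n}{o}]$, defined inductively by conditioning on the farthest vertex $x_\ell$ reached by the frog at $x_0$ and then restarting from $x_{\ell-1}$ or $x_\ell$ \emph{alone}, deliberately discarding the help of the other intermediate frogs. This loss of information is what yields the explicit recursion \eqref{F: DI} for $\dsP[\p{x_0}{x_n}{o}]=\phi_n(\beta^{(d)}(p))$, which Lemma~\ref{L: Form Recorr} then collapses to exactly the two-term recurrence you wrote, with $\lambda(b)$ as the dominant characteristic root; the Remark after Lemma~\ref{L: Lim Recorr} confirms that this $\phi_n$ is a genuine lower bound $G_n\le F_n$ for your full-chain probability, not equal to it. Since the sub-event still implies a chain of activations, domination of the frog model survives the modification. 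To repair your proof you must either adopt such a sub-event and prove the recursion for \emph{that}, or find an independent way to control $\lim_n\dsP[M\ge n]^{1/n}$; as written, the final step of your argument does not go through.
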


Notice that, by finding $p$ such that each one of these branching processes is supercritical, we obtain a sequence $\{\bar p_{n}(d)\}_{n \geq 1}$ of upper bounds for $\pc{\bbT_{d}}$.
Hence, to conclude that $\pc{\bbT_{d}} \leq \bar p(d)$, it will suffice to show that $\bar p_{n}(d)$ converges to $\bar p(d)$ as $n \to \infty$.
Before doing this, we present the construction of the branching processes and the proof of Lemma~\ref{L: BP}.

\subsection{Proof of Lemma~\ref{L: BP}}
\label{SS: BP}

To define the branching process $\{X^{(d)}_{\ell,n}\}_{\ell \geq 0}$, we consider the spreading of the frog model restricted to a tree rooted at $\raiz$, which is isomorphic to the $d$-ary tree.
First, given two vertices $x$ and $y$ of $\bbT_{d}$, we say that $y$ is a \textit{descendant} of $x$ if $x$ is one of the vertices of the path connecting $\raiz$ and $y$.
For $x\neq \raiz$, let $\bbT_{d}^{+}(x)$ denote the set consisting of all descendants of $x$
(including $x$ itself).
Fixed an arbitrary vertex $\raiz'$ neighbor of the root, we define $\bbT_{d}^{+}(\raiz)$ as the set of vertices in the subtree of $\bbT_{d}$ rooted at $\raiz$, that is obtained by disconnecting $\bbT_{d}^{+}(\raiz')$ from~$\bbT_{d}$.
See Figure~\ref{Fg: BPtreeH}.
For a vertex $x$ and $n \geq 1$, we denote by $L_n(x)$ the set of vertices in $\bbT_{d}^+(x)$ at distance $n$ from $x$.

\begin{figure}
\centering
\includegraphics[scale=0.7]{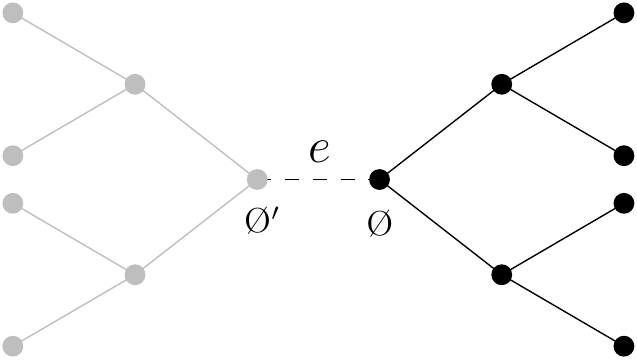}
\caption{$\bbT_{2}^{+}(\raiz)$ is depicted in black, and $\bbT_{2}(\raiz')$ in gray.
The edge~$e$ connecting $\raiz$ and $\raiz'$ is dashed.}
\label{Fg: BPtreeH}
\end{figure}

\begin{defn}
For vertices $x$ and $y\in L_{n}(x)$, let $x=x_0, x_1, \dots, x_{n}=y$ be the vertices in the path connecting $x$ and $y$, in such a way that $x_{\ell}$ and $x_{\ell+1}$ are neighbors.
For each $\ell=1, 2, \dots, n-1$, let $[\m{x_0}{x_{\ell}}{}]$ denote the event that $[\s{x_0}{x_{\ell}}] \cap [\ns{x_0}{x_{\ell+1}}]$.
Now we define the event $[\p{x_0}{x_{n}}{o}]$ inductively on $n$ by:
\begin{itemize}
\item[(i)] If $n=1$, then
\[[ \p{x_0}{x_1}{o}] = [\s{x_0}{x_1}]. \]
\item[(ii)] If $n=2$, then
\[[ \p{x_0}{x_2}{o}] = [\s{x_0}{x_2}] \cup [\m{x_0}{x_1}{},\p{x_1}{x_2}{o}]. \]
\item[(iii)] If $n\geq 3$, then
\[[ \p{x_0}{x_{n}}{o}] = [\s{x_0}{x_{n}}]\cup [\m{x_0}{x_1}{},\p{x_1}{x_n}{o}] \cup \bigcup_{\ell=2}^{n-1}[\m{x_0}{x_{\ell}}{},\p{\langle x_{\ell-1}, x_\ell \rangle}{x_{n}}{o}], \]
where
\[ [\p{\langle x_{\ell-1}, x_\ell \rangle}{x_{n}}{o}]=[\p{x_{\ell-1}}{x_n}{o}]\cup[\p{x_{\ell}}{x_n}{o}]. \]
\end{itemize}
Moreover, we denote the complement of $[\p{x}{y}{o}]$ by $[\np{x}{y}{o}]$.
\end{defn}

Fixed $n \geq 1$, let us define the Galton--Watson branching process $\{X^{(d)}_{\ell,n}\}_{\ell \geq 0}$ embedded in $\FM{\bbT_{d}}{p}$.
Consider $\YY^{(d)}_{0,n} = \{\raiz\}$, and, for $\ell \geq 1$, define
\[\YY^{(d)}_{\ell,n} = \bigcup_{x\in \YY_{\ell-1,n}}\{y\in L_{n}(x): \p{x}{y}{o}\}.\]
Let $X^{(d)}_{\ell,n}=|\YY^{(d)}_{\ell,n}|$ denote the cardinality of $\YY^{(d)}_{\ell,n}$.
Thus, the process starts from the root of $\bbT_{d}$, and, given a vertex $x$, its potential direct children are vertices located in $L_{n}(x)$.
A vertex $y\in L_{n}(x)$ is regarded as a \textit{child} of $x$ if and only if $[\p{x}{y}{o}]$.

Notice that, for every $n \geq 1$, $\{X^{(d)}_{\ell,n}\}_{\ell \geq 0}$ is indeed a branching process with $X^{(d)}_{0,n} = 1$, and whose survival implies the survival of the frog model on $\bbT_{d}$.
To finish the proof of Lemma~\ref{L: BP}, we need to show part~(iii) and formula~\eqref{F: Lim}.
To this end, we define the sequence of functions $\{\phi_n\}_{n\geq 1}$ with domain $[0, 1/d]$, inductively given by
\begin{equation}
\label{F: DI}
{\allowdisplaybreaks
\begin{aligned}
\phi_1(b)&=b,\\[0.1cm]
\phi_2(b)&=b^2(2-b),\\[0.1cm]
\phi_n(b)&=b^n+b(1-b)\phi_{n-1}(b)+{}\\[0.1cm]
&\phantom{=}{}+\sum_{\ell=2}^{n-1} b^\ell(1-b)[\phi_{n-\ell+1}(b)+(1-b)\phi_{n-\ell}(b)], \, n\geq 3.
\end{aligned}}%
\end{equation}
Then, part (iii) of Lemma~\ref{L: BP} is established once we prove the following result.

\begin{lem}
\label{L: Prob desc}
For every $n \geq 1$, the probability of vertex $x_{n} \in L_{n}(x_{0})$ being a child of~$x_{0}$ is given by
\[ \dsP[\p{x_0}{x_n}{o}] = \phi_n(\beta^{(d)}(p)), \]
where $\phi_{n}$ is defined by \eqref{F: DI}.
\end{lem}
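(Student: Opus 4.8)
\section*{Proof proposal for Lemma~\ref{L: Prob desc}}

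The plan is to prove $\dsP[\p{x_0}{x_n}{o}] = \phi_n(\beta^{(d)}(p))$ by strong induction on $n$, writing $b = \beta^{(d)}(p)$ and using \eqref{F: Prob open edge} in the form $\dsP[\s{x}{y}] = b^{\dist(x,y)}$. The cases $n=1$ and $n=2$ are immediate: for $n=1$ one has $\dsP[\p{x_0}{x_1}{o}] = \dsP[\s{x_0}{x_1}] = b = \phi_1(b)$; for $n=2$ the two events $[\s{x_0}{x_2}]$ and $[\m{x_0}{x_1}{},\p{x_1}{x_2}{o}]$ are disjoint (the first forces $[\s{x_0}{x_2}]$, the second forces $[\ns{x_0}{x_2}]$), and evaluating them gives $b^2 + b(1-b)\cdot b = b^2(2-b) = \phi_2(b)$.

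For the inductive step ($n\ge 3$) I would first observe that the events in the union defining $[\p{x_0}{x_n}{o}]$ are pairwise disjoint, being classified by the furthest vertex along $x_0,\dots,x_n$ reached by the particle originating at $x_0$: the event $[\s{x_0}{x_n}]$ says it reaches $x_n$, while $[\m{x_0}{x_\ell}{}]=[\s{x_0}{x_\ell}]\cap[\ns{x_0}{x_{\ell+1}}]$ says it reaches $x_\ell$ but not $x_{\ell+1}$. Since on a tree reaching $x_{\ell+1}$ forces reaching $x_\ell$, the events $[\s{x_0}{x_j}]$ are nested, so $\dsP[\m{x_0}{x_\ell}{}] = b^\ell - b^{\ell+1} = b^\ell(1-b)$ and $\dsP[\s{x_0}{x_n}] = b^n$. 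Moreover each $[\m{x_0}{x_\ell}{}]$ is measurable with respect to the trajectory of the $x_0$-particle alone, whereas the matching continuation event depends only on the trajectories of the particles at $x_1,\dots,x_{n-1}$ (disjoint subtrees), so the two are independent. Summing over the disjoint pieces and using the induction hypothesis $\dsP[\p{x_1}{x_n}{o}]=\phi_{n-1}(b)$ gives
\[
\dsP[\p{x_0}{x_n}{o}] = b^n + b(1-b)\phi_{n-1}(b) + \sum_{\ell=2}^{n-1} b^\ell(1-b)\,\dsP[\p{\langle x_{\ell-1},x_\ell\rangle}{x_n}{o}].
\]

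Comparing with \eqref{F: DI}, the proof reduces to the pair-continuation identity $\dsP[\p{\langle x_{\ell-1},x_\ell\rangle}{x_n}{o}] = \phi_{n-\ell+1}(b) + (1-b)\phi_{n-\ell}(b)$ for $2\le\ell\le n-1$, and this is the step I expect to be the main obstacle. The natural route is to prove the containment $[\s{x_{\ell-1}}{x_\ell}]\cap[\p{x_\ell}{x_n}{o}] \subseteq [\p{x_{\ell-1}}{x_n}{o}]$, a monotonicity asserting that once the $x_{\ell-1}$-particle reaches $x_\ell$, a successful propagation from $x_\ell$ to $x_n$ already realizes $[\p{x_{\ell-1}}{x_n}{o}]$. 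Granting this, and noting that every term defining $[\p{x_{\ell-1}}{x_n}{o}]$ forces $[\s{x_{\ell-1}}{x_\ell}]$, the union $[\p{x_{\ell-1}}{x_n}{o}]\cup[\p{x_\ell}{x_n}{o}]$ splits as the disjoint union $[\p{x_{\ell-1}}{x_n}{o}] \sqcup \big([\ns{x_{\ell-1}}{x_\ell}]\cap[\p{x_\ell}{x_n}{o}]\big)$; the first piece has probability $\phi_{n-\ell+1}(b)$ by induction, while in the second $[\ns{x_{\ell-1}}{x_\ell}]$ depends only on the $x_{\ell-1}$-trajectory and is independent of $[\p{x_\ell}{x_n}{o}]$, contributing $(1-b)\phi_{n-\ell}(b)$. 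Substituting back recovers \eqref{F: DI} exactly. The delicate point — precisely the ``technical difficulty'' flagged in \citet{IUB} — is this containment, because the recursively defined propagation event only tracks the two deepest vertices of the activated frontier; one must verify carefully, keeping strict account of which subtrees' trajectories each event depends on, that no successful configuration for $[\p{x_\ell}{x_n}{o}]$ is lost when the $x_{\ell-1}$-particle overshoots $x_\ell$, and it is here that the argument must be made rigorous.
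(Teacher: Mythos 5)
Your strategy coincides with the paper's: the paper evaluates $\dsP[\p{\langle x_{\ell-1},x_\ell\rangle}{x_n}{o}]$ by inclusion--exclusion together with the identity $\dsP[\p{x_{\ell-1}}{x_n}{o},\p{x_\ell}{x_n}{o}]=\dsP[\s{x_{\ell-1}}{x_\ell}]\,\dsP[\p{x_\ell}{x_n}{o}]$, and this is algebraically the same as your disjoint splitting $[\p{x_{\ell-1}}{x_n}{o}]\sqcup\bigl([\ns{x_{\ell-1}}{x_\ell}]\cap[\p{x_\ell}{x_n}{o}]\bigr)$: both rest on exactly the containment $[\s{x_{\ell-1}}{x_\ell}]\cap[\p{x_\ell}{x_n}{o}]\subseteq[\p{x_{\ell-1}}{x_n}{o}]$ that you isolate. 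You leave that containment unproved, and that is the genuine gap in your proposal; the paper, for its part, asserts the equivalent identity with only a ``notice that''.

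Your instinct that this is the delicate point is correct, but the right conclusion is stronger: the containment is \emph{false} whenever $\dist(x_\ell,x_n)\geq 3$. Relabel the path as $y_0=x_{\ell-1},y_1=x_\ell,\dots,y_4=x_n$ (so $n-\ell=3$) and consider the positive-probability configuration in which the $y_0$-particle reaches $y_3$ but not $y_4$, the $y_1$-particle reaches $y_4$, and the $y_2$- and $y_3$-particles die before moving. Then $[\s{y_0}{y_1}]$ and $[\p{y_1}{y_4}{o}]$ hold (the latter via $[\s{y_1}{y_4}]$), yet $[\p{y_0}{y_4}{o}]$ fails: the only term compatible with the $y_0$-trajectory is $[\m{y_0}{y_3}{},\p{\langle y_2,y_3\rangle}{y_4}{o}]$, and the pair event consults only the dead $y_2$- and $y_3$-particles. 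Hence $\dsP[\p{x_{\ell-1}}{x_n}{o},\p{x_\ell}{x_n}{o}]<\dsP[\s{x_{\ell-1}}{x_\ell}]\,\dsP[\p{x_\ell}{x_n}{o}]$ for $\ell\leq n-3$, and the equality asserted in the Lemma cannot hold as stated for $n\geq 5$. What does survive---and all that Lemma~\ref{L: BP} and Theorem~\ref{T: LS}(i) actually require, since the offspring mean only needs to be bounded from below---is the inequality $\dsP[\p{x_0}{x_n}{o}]\geq\phi_n(\beta^{(d)}(p))$. Your own decomposition already delivers this without the containment: the two pieces $[\p{x_{\ell-1}}{x_n}{o}]$ and $[\ns{x_{\ell-1}}{x_\ell}]\cap[\p{x_\ell}{x_n}{o}]$ are always disjoint subsets of the pair event (disjoint because $[\p{x_{\ell-1}}{x_n}{o}]\subseteq[\s{x_{\ell-1}}{x_\ell}]$), so $\dsP[\p{\langle x_{\ell-1},x_\ell\rangle}{x_n}{o}]\geq\phi_{n-\ell+1}(b)+(1-b)\phi_{n-\ell}(b)$ unconditionally, and the induction closes with ``$\geq$'' throughout. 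So rather than trying to prove the containment, you should weaken the statement to an inequality, which your argument proves and which suffices downstream.
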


\begin{proof}
First, from \eqref{F: Prob open edge}, it follows that for all $\ell=1, \dots, n-1$,
\[ \dsP[\m{x_0}{x_\ell}{}] = (\beta^{(d)}(p))^\ell \, [1-\beta^{(d)}(p)]. \]
In addition, notice that $\dsP[\p{\langle x_{\ell-1}, x_\ell \rangle}{x_{n}}{o}]$ can be obtained using the Inclusion-Exclusion Formula, and
\[\dsP[\p{x_{\ell-1}}{x_n}{o},\p{x_{\ell}}{x_n}{o}] =
\dsP[\s{x_{\ell-1}}{x_\ell}] \, \dsP[\p{x_\ell}{x_{n}}{o}] =
\beta^{(d)}(p) \, \dsP[\p{x_\ell}{x_{n}}{o}].\]
Hence, observing that for $n \geq 3$,
\begin{align*}
\dsP[\p{x_0}{x_n}{o}]&=\dsP[\s{x_0}{x_n}]+
\dsP[\m{x_0}{x_1}{}] \, \dsP[\p{x_{1}}{x_{n}}{o}]+{}\\[0.1cm]
&\phantom{=}{}+\sum_{\ell=2}^{n-1}\dsP[\m{x_0}{x_\ell}{}] \, \dsP[\p{\langle x_{\ell-1}, x_\ell \rangle}{x_{n}}{o}],
\end{align*}
and using \eqref{F: Prob open edge}, the result follows by induction on $n$.
\end{proof}

The proof of formula \eqref{F: Lim} proceeds in two steps, which are formulated in Lemmas \ref{L: Form Recorr} and \ref{L: Lim Recorr}.

\begin{lem}
\label{L: Form Recorr}
The sequence $\{\phi_n\}_{n\geq 1}$ satisfies the following linear difference equation of second order
\begin{equation}
\label{F: Recorr}
\phi_{n}(b)=b(2-b^2)\phi_{n-1}(b)-b^3(1-b)\phi_{n-2}(b), \, n \geq 3,
\end{equation}
with initial conditions $\phi_1(b)=b$ and $\phi_2(b)=b^2(2-b).$
\end{lem}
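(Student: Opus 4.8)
The plan is to eliminate the convolution sum in \eqref{F: DI} by relating consecutive terms, thereby collapsing the inductive definition into the second-order recursion \eqref{F: Recorr}. Write $S_n$ for the sum appearing in \eqref{F: DI}, namely
\[
S_n = \sum_{\ell=2}^{n-1} b^\ell(1-b)\bigl[\phi_{n-\ell+1}(b)+(1-b)\phi_{n-\ell}(b)\bigr],
\]
so that the definition reads $\phi_n(b) = b^n + b(1-b)\phi_{n-1}(b) + S_n$ for $n \geq 3$, equivalently $S_n = \phi_n(b) - b^n - b(1-b)\phi_{n-1}(b)$.

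The first and crucial step is to derive a first-order recursion for $S_n$ itself. Extracting the term $\ell = 2$ from $S_n$ and applying the substitution $\ell = m+1$ to the remaining terms, one checks that each term of $S_n$ with $\ell \geq 3$ equals $b$ times the corresponding term of $S_{n-1}$; the only leftover is the $\ell = 2$ contribution. This yields, for every $n \geq 4$,
\[
S_n = b\,S_{n-1} + b^2(1-b)\left[\phi_{n-1}(b) + (1-b)\phi_{n-2}(b)\right].
\]
The second step is pure substitution: replacing $S_n$ and $S_{n-1}$ by their expressions $\phi_n(b) - b^n - b(1-b)\phi_{n-1}(b)$ and $\phi_{n-1}(b) - b^{n-1} - b(1-b)\phi_{n-2}(b)$, the powers $b^n$ cancel, and collecting the coefficients of $\phi_{n-1}(b)$ and of $\phi_{n-2}(b)$ produces exactly $b(2-b^2)$ and $-b^3(1-b)$ respectively. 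Hence \eqref{F: Recorr} holds for all $n \geq 4$.

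Because this argument uses the summation formula for $\phi_{n-1}$, which is valid only when $n - 1 \geq 3$, the case $n = 3$ must be verified separately. This last step is a direct computation: evaluating the single-term sum in \eqref{F: DI} with $\phi_1(b) = b$ and $\phi_2(b) = b^2(2-b)$ gives $\phi_3(b) = b^6 - b^5 - 3b^4 + 4b^3$, which coincides with $b(2-b^2)\phi_2(b) - b^3(1-b)\phi_1(b)$. Together with the stated initial conditions, this establishes \eqref{F: Recorr} for every $n \geq 3$.

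The main obstacle is the reindexing identity for $S_n$; once it is in hand, the remainder is routine algebra. Care is needed to track the index shift correctly, to confirm that precisely the $\ell = 2$ term survives the telescoping, and to verify that the base case $n = 3$ is genuinely consistent with the recursion rather than being merely an initial condition.
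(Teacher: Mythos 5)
Your proposal is correct and follows essentially the same route as the paper: both verify the case $n=3$ directly and then obtain the recursion for $n\geq 4$ by reindexing the convolution sum, recognizing $b$ times the previous sum plus the $\ell=2$ term, and collecting coefficients of $\phi_{n-1}$ and $\phi_{n-2}$. The only cosmetic difference is that you isolate the sum as $S_n$ and write a first-order recursion for it, whereas the paper performs the same index shift inside the full expression for $\phi_{n+1}$.
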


\begin{proof}
It is straightforward to check that, for $n = 3$, both formulas \eqref{F: DI} and \eqref{F: Recorr} lead to $\phi_3(b) = b^3 (4 - 3 b - b^2 + b^3)$.
From~\eqref{F: DI}, it follows that, for every $n \geq 3$,
{\allowdisplaybreaks
\begin{align*}
\phi_{n+1}(b)&=b^{n+1}+\sum_{\ell=2}^{n} b^\ell(1-b)[\phi_{n-\ell+2}(b)+(1-b)\phi_{n-\ell+1}(b)]+b(1-b)\phi_{n}(b)\\
&=b^{n+1}+\sum_{\ell=2}^{n-1} b^{\ell+1}(1-b)[\phi_{n-\ell+1}(b)+(1-b) \phi_{n-\ell}(b)]+{}\\
&\phantom{=}{}+b^2(1-b)[\phi_n(b)+(1-b)\phi_{n-1}(b)]+b(1-b)\phi_n(b).\\
\intertext{Thus, pulling a factor of $b$ out of the first two addends, we obtain}
\phi_{n+1}(b)&=b[\phi_n(b)-b(1-b)\phi_{n-1}(b)]+{}\\
&\phantom{=}{}+b^2(1-b)[\phi_n(b)+(1-b)\phi_{n-1}(b)]+b(1-b)\phi_n(b)\\
&=b(2-b^2)\phi_n(b)-b^3(1-b)\phi_{n-1}(b),
\end{align*}}%
as desired.
\end{proof}

\begin{lem}
\label{L: Lim Recorr}
For every $b \in [0, 1/d]$, we have that
\[ \lim_{n \to \infty} {\left[ \phi_n(b) \right]}^{1 / n} = \lambda(b)
= \frac{b}{2} \left(2-b^2 + \psi(b)\right). \]
\end{lem}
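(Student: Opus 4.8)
The plan is to analyze the linear recurrence \eqref{F: Recorr} via its characteristic equation and show that the dominant root governs the $n$-th root asymptotics. Writing the recurrence as $\phi_n = b(2-b^2)\phi_{n-1} - b^3(1-b)\phi_{n-2}$, I would introduce the characteristic polynomial
\[
\chi(\mu) = \mu^2 - b(2-b^2)\mu + b^3(1-b),
\]
whose roots $\mu_{\pm}$ determine the solution. The claimed limit $\lambda(b) = \tfrac{b}{2}(2-b^2+\psi(b))$ with $\psi(b)=\sqrt{b^4-4b+4}$ should be exactly the larger root $\mu_+$, so the first concrete step is to verify this by computing the discriminant
\[
\Delta = b^2(2-b^2)^2 - 4b^3(1-b) = b^2\left[(2-b^2)^2 - 4b(1-b)\right] = b^2(b^4 - 4b + 4) = b^2\,\psi(b)^2,
\]
giving $\mu_{\pm} = \tfrac{b}{2}\bigl(2-b^2 \pm \psi(b)\bigr)$ and confirming $\mu_+ = \lambda(b)$.

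Next I would treat the degenerate cases carefully. For $b=0$ we have $\phi_n(0)=0$ for all $n$, and $\lambda(0)=0$, so the statement holds trivially once we adopt the convention $0^{1/n}=0$; I would dispose of this at the outset and assume $b \in (0, 1/d]$ thereafter. For $b>0$ the two roots are real, distinct (since $\psi(b)>0$ on this range because $b^4-4b+4 = (b^2)^2 - 4b + 4 > 0$ for $b \le 1/d \le 1/2$), and satisfy $\mu_+ > \mu_- \ge 0$ with $\mu_+\mu_- = b^3(1-b) \ge 0$. The general solution is $\phi_n(b) = A\mu_+^n + B\mu_-^n$, with $A, B$ fixed by the initial conditions $\phi_1 = b$ and $\phi_2 = b^2(2-b)$. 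Solving the $2\times 2$ linear system yields $A = (b\mu_+ - \phi_2)/(\mu_+(\mu_+-\mu_-))$ or its equivalent, and the key point I need is simply that $A \neq 0$.

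Assuming $A \ne 0$, the asymptotics follow from $\phi_n(b) = A\mu_+^n\bigl(1 + (B/A)(\mu_-/\mu_+)^n\bigr)$ and $\mu_-/\mu_+ < 1$, whence $\phi_n(b)^{1/n} = \mu_+ \cdot |A|^{1/n}\bigl(1+o(1)\bigr)^{1/n} \to \mu_+ = \lambda(b)$, since $|A|^{1/n}\to 1$ and the bracketed factor tends to $1$. The main obstacle is therefore establishing $A \ne 0$: if the initial data happened to excite only the subdominant mode $\mu_-$, the limit would be $\mu_-$ instead of $\lambda(b)$. I would verify $A \ne 0$ by a direct computation, checking that $\phi_2 - \mu_-\phi_1 \ne 0$ (this quantity is proportional to $A$); explicitly $\phi_2 - \mu_-\phi_1 = b^2(2-b) - \tfrac{b}{2}(2-b^2-\psi(b))\cdot b = \tfrac{b^2}{2}\bigl(2 - 2b + b^2 + \psi(b)\bigr)$, which is strictly positive for $b \in (0,1/d]$ because every term in the parenthesis is nonnegative and $\psi(b)>0$. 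This confirms $A>0$ and completes the argument. A tidy alternative that sidesteps even solving for $A$ is to note $\phi_n > 0$ for all $n$ (immediate from \eqref{F: DI}, as every term is a product of nonnegative factors with $b^n>0$) and then sandwich: since $\mu_- \ge 0$ and $\phi_n \le (A^+ + B^+)\mu_+^n$ while $\phi_n \ge c\,\mu_+^n$ for some $c>0$ once $A>0$ is known, the $n$-th root is squeezed to $\mu_+$; but the cleanest exposition is the explicit $A \ne 0$ verification above.
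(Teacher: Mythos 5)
Your proposal is correct and follows essentially the same route as the paper: solve the second-order recurrence from Lemma~\ref{L: Form Recorr} via its characteristic polynomial, check the discriminant is positive, identify $\lambda(b)$ as the larger of the two distinct real roots, and read off the limit from the general solution. You are in fact slightly more careful than the paper's own proof, which does not explicitly verify that the coefficient of the dominant mode is nonzero (this is only confirmed in the subsequent remark, where $c_2 = (\psi(b)+b^2)/(2\psi(b)) > 0$ is computed); your check that $\phi_2 - \mu_-\phi_1 = \tfrac{b^2}{2}\left(2-2b+b^2+\psi(b)\right) > 0$ closes that small gap, though note your justification should read $2-2b+b^2 = 1+(1-b)^2>0$ rather than ``every term is nonnegative,'' since $-2b<0$.
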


\begin{proof}
It is enough to consider $b \in (0, 1/d]$.
In order to solve \eqref{F: Recorr}, we compute the discriminant $\Delta_0$ of the associated characteristic polynomial
\begin{equation}
\label{F: EC}
\lambda^2-b(2-b^2)\lambda+b^3(1-b)=0,
\end{equation}
obtaining $\Delta_0=b^2(4-4b+b^4) > 0$.
Therefore, the equation \eqref{F: EC} has two distinct real roots, which are given by
\[ \lambda_{\pm}(b) = \frac{b}{2} \left(2-b^2 \pm \psi(b)\right). \]
The general solution of \eqref{F: Recorr} is then expressed as
\begin{equation}
\label{F: SG}
\phi_n(b) = c_1 \, (\lambda_{-}(b))^n + c_2 \, (\lambda_{+}(b))^n
= (\lambda_{+}(b))^n \left[ c_1 \left( \frac{\lambda_{-}(b)}{\lambda_{+}(b)} \right)^n + c_2 \right],
\end{equation}
where $c_1$ and $c_2$ are functions of $b$, determined by the initial conditions.
As $\lambda(b) = \lambda_{+}(b) > \lambda_{-}(b)$, the result follows.
\end{proof}

\begin{obs}
To derive a closed formula for $\phi_n$, we can extend the sequence $\{\phi_n\}$ for the index $n=0$, by rewriting
\begin{equation*}
\phi_{n}(b)=b(2-b^2)\phi_{n-1}(b)-b^3(1-b)\phi_{n-2}(b), \, n \geq 2,
\end{equation*}
with initial conditions $\phi_0(b)=1$ and $\phi_1(b)=b$.
By imposing these initial conditions in~\eqref{F: SG}, we get
\[ c_1 = \frac{\psi(b)-b^2}{2 \, \psi(b)} \quad \text{and} \quad
c_2 = \frac{\psi(b)+b^2}{2 \, \psi(b)}. \]
Consequently, for every $n \geq 0$ and $b \in [0, 1/d]$,
\begin{equation*}
\phi_n(b)=\frac{b^n}{2^{n+1} \, \psi(b)} \left[(\psi(b)-b^2)(2-b^2-\psi(b))^n+(\psi(b)+b^2)(2-b^2+\psi(b))^n\right].
\end{equation*}
We underline that $\phi_n$ equals the function $G_n$ obtained through the alternative method described by \citet{IUB}, in the remark at the end of Section~4, for constructing another approximating function $G_n$ to $F_n$, such that $F_n(b) \geq G_n(b)$ for every $b$.
That is, $\phi_n$ is identical to the function $G_n$ that one obtains when the inequality $(1-b^k) \geq (1-b^2)$ for $k \geq 2$ is used instead of $(1-b^k) \geq (1-b)$ for $k \geq 1$, in the definition of $G_n$.
This is the reason why the upper bound we establish here coincides with the one conjectured by \citet{IUB}.
\end{obs}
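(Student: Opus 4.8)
The plan is to verify the four assertions of the remark in turn; the first three are direct computations, and the last reduces, via uniqueness for linear recurrences, to a transcription of the construction in \citet{IUB}. I would begin by justifying the extension of the recurrence to the index $n = 2$. Lemma~\ref{L: Form Recorr} already gives \eqref{F: Recorr} for $n \geq 3$, so it suffices to feed $\phi_0(b) = 1$ and $\phi_1(b) = b$ into \eqref{F: Recorr} at $n = 2$ and check that the correct value is returned: indeed $b(2 - b^2)\,b - b^3(1-b)\cdot 1 = 2b^2 - b^3 = b^2(2 - b) = \phi_2(b)$. Hence the extended sequence $\{\phi_n\}_{n \geq 0}$ satisfies \eqref{F: Recorr} for every $n \geq 2$, and its general term is still represented by \eqref{F: SG}.

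Next I would pin down $c_1$ and $c_2$ in \eqref{F: SG} from the initial data. Evaluating \eqref{F: SG} at $n = 0$ and $n = 1$ gives the linear system $c_1 + c_2 = 1$ and $c_1 \lambda_{-}(b) + c_2 \lambda_{+}(b) = b$. Using $\lambda_{+}(b) - \lambda_{-}(b) = b\,\psi(b)$ and $b - \lambda_{-}(b) = \tfrac{b}{2}(b^2 + \psi(b))$, elimination yields $c_2 = (b - \lambda_{-}(b))/(\lambda_{+}(b) - \lambda_{-}(b)) = (\psi(b) + b^2)/(2\psi(b))$ and $c_1 = 1 - c_2 = (\psi(b) - b^2)/(2\psi(b))$, exactly the stated constants. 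Substituting these together with $\lambda_{\pm}(b)^n = (b/2)^n (2 - b^2 \pm \psi(b))^n$ back into \eqref{F: SG} and extracting the common factor $b^n/(2^{n+1}\psi(b))$ produces the claimed closed form; it holds at $b = 0$ as well, since there $\psi(0) = 2$ and the formula returns $\phi_0(0) = 1$ and $\phi_n(0) = 0$ for $n \geq 1$.

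Finally, for the identity $\phi_n = G_n$ I would avoid comparing closed forms and instead argue by uniqueness. The idea is to unwind the recursive definition of $G_n$ from \citet{IUB} under the refined bound $(1 - b^k) \geq (1 - b^2)$ for $k \geq 2$ (retaining $(1 - b)$ only for $k = 1$), and to verify that the sequence so obtained obeys the \emph{same} second-order recurrence \eqref{F: Recorr} with the same initial values $G_1(b) = b$ and $G_2(b) = b^2(2 - b)$. Uniqueness of the solution of a linear difference equation with prescribed initial conditions then forces $G_n = \phi_n$ for all $n$, which is precisely why the upper bound we establish coincides with the one conjectured in \citet{IUB}. I expect this last step to be the main obstacle: \citet{IUB} specify $G_n$ only implicitly, through a path decomposition, so carrying it out requires importing their construction and performing the algebraic reduction to \eqref{F: Recorr} — work that lies outside the present excerpt. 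Once the matching recurrence and initial data are secured, however, the conclusion is immediate.
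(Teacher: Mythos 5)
Your computations coincide with the paper's own derivation: the verification that \eqref{F: Recorr} extended to $n=2$ with $\phi_0(b)=1$, $\phi_1(b)=b$ reproduces $\phi_2(b)=b^2(2-b)$, the linear system giving $c_1=(\psi(b)-b^2)/(2\psi(b))$ and $c_2=(\psi(b)+b^2)/(2\psi(b))$, and the substitution into \eqref{F: SG} are exactly the steps the remark carries out (your separate check at $b=0$, where the two characteristic roots coincide, is a careful extra touch). As for the identification $\phi_n = G_n$, the paper itself asserts this without detailed verification, merely describing which inequality defines $G_n$ in \citet{IUB}, so your plan --- matching the recurrence and initial data and invoking uniqueness of solutions of linear difference equations --- is a sound route to the same claim and does not deviate from the paper's approach.
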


\subsection{Proof of Theorem \ref{T: LS}}
\label{SS: Proof LS}

Again let $d \geq 2$ be fixed.
We define the functions
\begin{equation}
\label{F: fn/f}
f_n^{(d)}(p) = {\left[ \phi_n(\beta^{(d)}(p)) \right]}^{1 / n} - \frac{1}{d}, \, n \geq 1,
\quad \text{and} \quad
f^{(d)}(p) = \lambda(\beta^{(d)}(p)) - \frac{1}{d}.
\end{equation}
From Equation~\eqref{F: Lim}, we have that, for every $p \in [0, 1]$,
\begin{equation*}
\lim_{n\to \infty} f_n^{(d)}(p) = f^{(d)}(p).
\end{equation*}

We also observe that
\begin{equation}
\label{F: Equiv}
\lambda(b) = \frac{1}{d} \iff R^{(d)}(b) = 0,
\end{equation}
where $R^{(d)}$ is the quartic polynomial given in~\eqref{F: Pol-v}.
Since $\lambda$ is an increasing function, satisfying $\lambda(0) = 0$ and $\lambda(1/d) > 1/d$ (as $R^{(d)}(1/d) = (d - 1)/d > 0$), we conclude that the equivalent equations in~\eqref{F: Equiv} have a unique solution $\bar v = \bar v(d)$ in the interval $[0, 1/d]$.
In addition, from \eqref{F: Iff}, it follows that
\[ \bar p(d) = \frac{(d+1)\bar v}{1+ d\bar v^{2}} \]
is the unique root of the equation $f^{(d)}(p) = 0$ in the interval $[0, 1]$.

To prove part (i) of Theorem \ref{T: LS}, we use the following fact of Real Analysis.

\begin{lem}
\label{L: Conv}
Let $\{ f_n \}$ be a sequence of increasing, continuous real-valued functions defined on $[0, 1]$, such that $f_n (0) < 0$ and $f_n (1) > 0$ for every $n$.
Suppose that $\{ f_n \}$ converges pointwise as $n \to \infty$ to an increasing, continuous function $f$ defined on $[0, 1]$, and let $\bar r_n$ be the unique root of $f_n$ in $[0, 1]$.
Then, there exists $\bar r = \lim_{n \to \infty} \bar r_n$ and $f (\bar r) = 0$.
\end{lem}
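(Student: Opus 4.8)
The plan is to reduce everything to the behaviour of the limit function $f$ at two fixed points lying just to the left and just to the right of its root, exploiting that $f$ is strictly increasing (which is what the hypothesis must mean here: if $f$ were only nondecreasing, the roots $\bar r_n$ could oscillate across a flat zero-level set of $f$ and fail to converge).

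First I would pin down the root of $f$. Each $f_n$ is increasing and continuous with $f_n(0) < 0 < f_n(1)$, so by the Intermediate Value Theorem it has a root, which is the unique root $\bar r_n$ given in the statement. Passing to the pointwise limit yields $f(0) = \lim_n f_n(0) \le 0$ and $f(1) = \lim_n f_n(1) \ge 0$; since $f$ is continuous and strictly increasing on $[0,1]$, a further application of the Intermediate Value Theorem shows that $f$ has a unique root $\bar r \in [0,1]$.

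The heart of the argument is then a two-point comparison. Fix $\eps > 0$ and suppose first that $\bar r \in (0,1)$, so that $\bar r - \eps$ and $\bar r + \eps$ lie in $[0,1]$ for $\eps$ small. By strict monotonicity of $f$ we have $f(\bar r - \eps) < 0 < f(\bar r + \eps)$. By pointwise convergence there is $N$ such that, for all $n \ge N$, $f_n(\bar r - \eps) < 0$ and $f_n(\bar r + \eps) > 0$. Now I would invoke only the monotonicity of $f_n$ together with $f_n(\bar r_n) = 0$: the inequality $f_n(\bar r + \eps) > 0$ forces $\bar r + \eps > \bar r_n$ (otherwise $\bar r + \eps \le \bar r_n$ would give $f_n(\bar r + \eps) \le f_n(\bar r_n) = 0$), while $f_n(\bar r - \eps) < 0$ forces $\bar r - \eps < \bar r_n$. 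Hence $\bar r - \eps < \bar r_n < \bar r + \eps$ for every $n \ge N$, i.e. $\bar r_n \to \bar r$. This proves simultaneously that the limit $\bar r = \lim_n \bar r_n$ exists and that $f(\bar r) = 0$. The boundary cases $\bar r = 0$ and $\bar r = 1$ are handled by the same comparison using only the single point $\bar r + \eps$ (respectively $\bar r - \eps$), together with $0 \le \bar r_n \le 1$.

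The main obstacle is conceptual rather than computational: one must use that the limit $f$ is \emph{strictly} increasing, not merely nondecreasing. If $f$ were allowed to be constant on a subinterval containing its zero level, strictly increasing $f_n$ with unique roots could be arranged so that $\bar r_n$ oscillates and admits no limit; thus strictness is exactly what guarantees the unique root $\bar r$ and drives the squeeze above. Apart from this, the proof uses nothing beyond the Intermediate Value Theorem, monotonicity, and pointwise convergence evaluated at two fixed points, so no uniform-convergence machinery (such as a P\'olya--Dini type argument) is needed.
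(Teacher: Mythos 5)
Your proof is correct and complete. Note, however, that the paper itself offers \emph{no} proof of this lemma: it is introduced as ``the following fact of Real Analysis'' and simply invoked, so there is no argument of the authors' to compare yours against. Your two-point squeeze --- evaluating the pointwise convergence only at $\bar r - \eps$ and $\bar r + \eps$ and then using the monotonicity of each $f_n$ together with $f_n(\bar r_n)=0$ to trap $\bar r_n$ in $(\bar r - \eps, \bar r + \eps)$ --- is the natural elementary argument, and it correctly avoids any appeal to uniform convergence or Dini-type results. Your side remark is also a genuine observation that the paper glosses over: as stated, with ``increasing'' read as merely nondecreasing, the conclusion is false (a limit $f$ that is flat at level zero on an interval admits strictly increasing approximants whose unique roots oscillate across that interval), so the hypothesis must be read as strict monotonicity of $f$; this does hold for the functions $f^{(d)}$ to which the lemma is applied, since $\beta^{(d)}$ and $\lambda$ are strictly increasing on the relevant ranges.
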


It is straightforward to check that $\{f_n^{(d)}\}$ and $f^{(d)}$ given in \eqref{F: fn/f} satisfy the conditions of Lemma \ref{L: Conv}.
Hence, by defining $\bar p_{n} = \bar p_{n}(d)$ as the unique root of $f_n^{(d)}$ in the interval 
$[0, 1]$, we obtain that
\[ \lim_{n\to\infty} \bar p_{n}(d) = \bar p(d). \]
But for every $p > \bar p_{n}(d)$, we have that $f_n^{(d)}(p) > 0$, whence, from Lemma \ref{L: BP}, the frog model on $\bbT_{d}$ survives with positive probability.
Consequently,
\[ p_c(\bbT_{d}) \leq \bar p_n(d), \]
and part (i) of Theorem~\ref{T: LS} is established by taking $n \to \infty$.

Part (ii) is proved by expanding and simplifying properly the equation 
\[ R^{(d)}(\beta^{(d)}(p)) = 0. \]
The formula manipulation involved can be accomplished by using a mathematical software.
This tool can also be used to establish that $Q^{(d)}(\hat p (d)) > 0$ for every $d\geq 2$.
Together with the fact that $Q^{(d)}(0) < 0$, this yields part (iii).~\qed

\section{Proof of Theorem \ref{T: Root Pol UB}}
\label{S: Proof Root}

In brief, the proof proceeds by using Descartes' solution of a quartic equation and a couple of results to isolate the root in $(0, 1)$ among the four roots of $Q^{(d)}$. 
The central steps are explained below.
See \citet{Dickson} for an excellent account on the theory of equations.

First, notice that, by Descartes' Rule of Signs, the polynomial $Q^{(d)}$ given in \eqref{F: Pol-p} has either one or three positive roots. 
Naturally, $\bar p(d)$ is the smallest positive root. 
Using Descartes' method, we apply the Tschirnhaus transformation $p = z + (d+1)/(3 d+1)$ to the equation $Q^{(d)}(p) = 0$, thereby obtaining the reduced form
\begin{equation}
\label{F: pol red}
z^4+\mathcal{Q}z^2+\mathcal{R}z+\mathcal{S}=0,
\end{equation}
where $\mathcal{Q}$ and $\mathcal{R}$ are given in Definition \ref{D: Args Pol}, and $\mathcal{S}=-(d+1)^4(2d+1)/(3d+1)^4$. 
Next we consider the auxiliary cubic equation
\begin{equation}
\label{F: pol cub}
x^3+\frac{1}{2} \mathcal{Q} x^2+\frac{1}{16}(\mathcal{Q}^2-4\mathcal{S})x-\frac{1}{64}\mathcal{R}^2 = 0.
\end{equation}
To solve equation \eqref{F: pol red}, we have to pick any nonzero root of \eqref{F: pol cub}, say $\mathcal{W}$, and define $\mathcal{K}$ as either square root of the selected $\mathcal{W}$.
Then, the four roots of \eqref{F: pol red} are the roots of the two quadratic polynomials
{\allowdisplaybreaks
\begin{align*}
g_{1}(z)&=z^2+2\mathcal{K}z+\frac{1}{2}\mathcal{Q}+2\mathcal{K}^2-\frac{\mathcal{R}}{4\mathcal{K}},\\[0.2cm]
g_{2}(z)&=z^2-2\mathcal{K}z+\frac{1}{2}\mathcal{Q}+2\mathcal{K}^2+\frac{\mathcal{R}}{4\mathcal{K}},
\end{align*}}%
which are given by
\begin{equation}
{\allowdisplaybreaks
\begin{aligned}
\label{F: Roots}
z_1&= -\mathcal{K}-\frac{[\mathcal{K}(-4 \mathcal{K}^3-2\mathcal{K} \mathcal{Q}+\mathcal{R})]^{1/2}}{2\mathcal{K}},\\[0.2cm]
z_2&= -\mathcal{K}+\frac{[\mathcal{K}(-4 \mathcal{K}^3-2\mathcal{K} \mathcal{Q}+\mathcal{R})]^{1/2}}{2\mathcal{K}},\\[0.2cm]
z_3&= \mathcal{K}-\frac{[-\mathcal{K}(4 \mathcal{K}^3+2\mathcal{K} \mathcal{Q}+\mathcal{R})]^{1/2}}{2\mathcal{K}},\\[0.2cm]
z_4&= \mathcal{K}+\frac{[-\mathcal{K}(4 \mathcal{K}^3+2\mathcal{K} \mathcal{Q}+\mathcal{R})]^{1/2}}{2\mathcal{K}}.
\end{aligned}}%
\end{equation}

To study the roots of \eqref{F: pol red}, we compute the discriminant $\Delta$ of the equation \eqref{F: pol cub}.
This can be written as $\Delta=d^{-4} \, (3 d+1)^{-10} \, \mathcal{H}_0(d)/4096$, where
\[ \mathcal{H}_0(d)=(d-1)^4 (d+1)^{12} \left(32 d^3-275 d^2-162 d-27\right). \]
Notice that $\mathcal{H}_0(d)$ is a polynomial of degree $19$. 
Using Budan--Fourier Theorem for $\mathcal{H}_0(d)$ and the fact that $\mathcal{H}_0(9)<0<\mathcal{H}_0(10)$, we conclude that $\Delta<0$ for $2\leq d \leq 9$, and $\Delta>0$ for $d\geq 10$.
Hence, we split the proof into two cases:

\begin{lprova}
\item $2 \leq d \leq 9$:
We have that $\Delta < 0$, so the equation \eqref{F: pol red} has two distinct real and two imaginary roots, and therefore the roots of $Q^{(d)}$ are one negative, one positive and two complex conjugate to each other.
Moreover, the equation \eqref{F: pol cub} has a unique real root, which is positive. 
The value of $\mathcal{K}$ given in Definition~\ref{D: Args Pol} for $2 \leq d \leq 9$ equals the positive square root of this unique real root of equation \eqref{F: pol cub}, which is obtained using Cardano's solution for cubic equations.
We observe that $z_3$ and $z_4$ in \eqref{F: Roots} are imaginary numbers (otherwise $z_3$ would be negative). 
Consequently, $z_2+(d+1)/(3 d+1)$ is the unique positive root of $Q^{(d)}$.
This completes the proof of the formula for $\bar p(d)$ in case~(i).

\item $d \geq 10$:
As $\Delta>0$, $\mathcal{Q}<0$ and $4 \mathcal{S}-\mathcal{Q}^2<0$, the roots of \eqref{F: pol red} are all real and distinct, and this implies that $Q^{(d)}$ has one negative and three positive roots. 
Furthermore, the equation \eqref{F: pol cub} has three positive real roots, which are better expressed in a trigonometric form (the so-called irreducible case). 
The constant $\mathcal{K}$ as given in Definition~\ref{D: Args Pol} for $d \geq 10$ is the positive square root of one of these roots. 
Thus, the four roots of \eqref{F: pol red} are $z_1<z_2$ and $z_3<z_4$. 
To finish the proof, it is enough to show that $z_1<-(d+1)/(3d+1)$. 
But this inequality holds true, since $g_{1}(-(d+1)/(3d+1))<0$.~\qed
\end{lprova}


\bibliography{bibUBFH}
\bibliographystyle{plainnat}

\end{document}